\documentclass[english]{article}
\usepackage[T1]{fontenc}
\usepackage[latin9]{inputenc}
\usepackage{amsthm}
\usepackage{amsmath}
\usepackage{amssymb}

\makeatletter
\theoremstyle{plain}
\newtheorem{thm}{\protect\theoremname}[section]
\theoremstyle{plain}
\newtheorem{lem}[thm]{\protect\lemmaname}
\theoremstyle{plain}
\newtheorem{cor}[thm]{\protect\corollaryname}
\ifx\proof\undefined
\newenvironment{proof}[1][\protect\proofname]{\par
\normalfont\topsep6\p@\@plus6\p@\relax
\trivlist
\itemindent\parindent
\item[\hskip\labelsep
\scshape
#1]\ignorespaces
}{%
\endtrivlist\@endpefalse
}
\providecommand{\proofname}{Proof}
\fi
\newcommand{\lyxaddress}[1]{
\par {\raggedright #1
\vspace{1.4em}
\noindent\par}
}

\date{}

\makeatother

\usepackage{babel}
\providecommand{\corollaryname}{Corollary}
\providecommand{\lemmaname}{Lemma}
\providecommand{\theoremname}{Theorem}

\begin{document}

\title{Weak Gelfand Pair Property And Application To GL(n+1),GL(n) Over
Finite Fields}

\author{Yoav Ben Shalom}

\maketitle
Abstract. Let $F_{q}$ be the finite field with $q$ elements. Consider
the standard embedding $GL_{n}(F_{q})\hookrightarrow GL_{n+1}(F_{q})$.
In this paper we prove that for every irreducible representation $\pi$
of $GL_{n+1}(F_{q})$ over algebraically closed fields of characteristic
different from $2$ we have $dim\,\pi^{GL_{n}(F_{q})}\leq2$.

To do that we define a property of weak Gelfand pair and prove a generalization
of Gelfand trick for weak Gelfand pairs, using the anti-involution
transpose to get the result for $GL_{n+1}(F_{q})$,$GL_{n}(F_{q})$.
In a similar manner we show that for $q$ not a power of $2$ $(O_{n+1}(F_{q}),O_{n}(F_{q}))$
is a Gelfand pair over algebraically closed fields of characteristic
different from $2$.

\section{Introduction}

\subsection{Gelfand pairs}

A Gelfand pair is a pair of a group and its sub-group, usually denoted
by $G>H$, such that if $\pi$ is an irreducible representation of
$G$ then the dimension of the space of $H$ invariant vectors in
$\pi$ is at most 1. 

The theory was developed in the setting of Lie groups in a paper by
I. M. Gelfand {[}2{]}, and today has various applications. Finite
and infinite Gelfand pairs have been studied in asymptotic and geometric
group theory in connection with branch groups introduced by R.I. Grigorchuk.
Other applications were association schemes of coding theory, orthogonal
polynomials and special functions. Gelfand pairs are also used in
the study of finite Markov chains. For more uses and applications
see {[}1{]}.

\subsection{Structure of the proof}

The pair GL(n+1),GL(n) has been studied over various fields, and was
proven to be a Gelfand pair (better yet - a strong Gelfand pair) over
local fields (see {[}3{]}, {[}4{]}, {[}5{]}, {[}6{]}, {[}7{]}). Over
finite fields however it is not true, although we can still say something
about the pair. For some $q=p^{n}$ for $p$ prime, look at the groups
$GL_{n+1}(F_{q})\supset GL_{n}(F_{q})$. We want to be able to quantify
``how much'' they are not a Gelfand pair. One way to do that is
using the following lemma. Note that throughout the paper we will
only consider finite groups.
\begin{lem}
Gelfand trick. Let $H<G$ be groups. Suppose we have an anti-involution
$\sigma:G\rightarrow G$ that preserves all $H$ double-cosets. Then
over algebraically closed fields of characteristic zero $(G,H)$ is
a Gelfand pair.
\end{lem}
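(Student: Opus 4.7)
The plan is to translate the Gelfand pair property into commutativity of the Hecke algebra $\mathcal{H}(G,H)$ of $H$-biinvariant functions on $G$ (under convolution), and then exploit the anti-involution $\sigma$ to force that commutativity.

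First I would establish the standard characterization: in our setting, $(G,H)$ is a Gelfand pair if and only if $\mathcal{H}(G,H)$ is commutative. Because $k$ is algebraically closed of characteristic zero, $k[G]$ is semisimple by Maschke, and there is a canonical isomorphism $\mathcal{H}(G,H) \cong \mathrm{End}_G(\mathrm{Ind}_H^G \mathbf{1})$. Decomposing $\mathrm{Ind}_H^G \mathbf{1} = \bigoplus_\pi \pi^{\oplus m_\pi}$ and invoking Frobenius reciprocity gives $m_\pi = \dim \pi^H$, so $\mathcal{H}(G,H) \cong \bigoplus_\pi M_{m_\pi}(k)$; this algebra is commutative precisely when every $m_\pi \le 1$.

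Next I would extend $\sigma$ linearly to an anti-involution of the group algebra $k[G]$, satisfying $\sigma(ab) = \sigma(b)\sigma(a)$ and $\sigma^2 = \mathrm{id}$. Since $\sigma$ preserves the double coset $HeH = H$, we have $\sigma(H)=H$, and therefore the extended $\sigma$ preserves the subspace $\mathcal{H}(G,H) \subset k[G]$ of $H$-biinvariant elements.

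The payoff is then immediate. The Hecke algebra has the natural basis of indicator elements $\{\mathbf{1}_D\}_D$ indexed by $H$-double cosets, and by hypothesis $\sigma$ restricts to a bijection of each $D$ onto itself, so $\sigma(\mathbf{1}_D) = \mathbf{1}_D$ for every $D$. Thus $\sigma$ acts as the identity on $\mathcal{H}(G,H)$, and for any $a,b \in \mathcal{H}(G,H)$,
\[
ab = \sigma(ab) = \sigma(b)\sigma(a) = ba.
\]
So $\mathcal{H}(G,H)$ is commutative, and by the first step $(G,H)$ is a Gelfand pair. The real content of the proof lies in setting up the Hecke algebra characterization; once that machinery is in place, the anti-involution hypothesis forces commutativity by the one-line calculation above, and this final step is by far the easiest.
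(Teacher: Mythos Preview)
Your argument is correct and is the standard proof of the classical Gelfand trick: reduce to commutativity of the Hecke algebra $\mathcal{H}(G,H)$ via $\mathcal{H}(G,H)\cong\mathrm{End}_G(\mathrm{Ind}_H^G\mathbf{1})\cong\bigoplus_\pi M_{m_\pi}(k)$ with $m_\pi=\dim\pi^H$, then use that $\sigma$ fixes each double-coset indicator to conclude $ab=\sigma(ab)=\sigma(b)\sigma(a)=ba$.

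Note, however, that the paper does not actually supply a proof of this lemma; it is quoted in the introduction as the classical result motivating the weak Gelfand pair notion. The paper's own machinery, developed in Lemmas~2.1 and~2.2, does recover the Gelfand trick as the special case $k=0$, but by a genuinely different route: rather than working with the Hecke algebra, it shows directly that two $H$-invariant vectors $v_1,v_2$ whose matrix coefficients $\varphi(\pi(g)v_i)$ are $\sigma$-symmetric must be linearly dependent (Lemma~2.1), and then uses an even/odd decomposition of $F(G)^{H\times H\times(Z(G),Z_\pi)}$ under $\sigma$ to force all matrix coefficients into the symmetric part when $k=0$. Your Hecke-algebra argument is cleaner for the classical statement and makes the role of semisimplicity (hence the characteristic-zero hypothesis) transparent; the paper's matrix-coefficient approach is what allows the quantitative generalization to $k>0$ and to characteristic not equal to~$2$.
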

Looking at the lemma one might wonder what we can say if we have an
anti-involution that preserves almost all double-cosets - we still
want to have a property similar to being a Gelfand pair, but weaker.
We also want the result to be valid over more fields. We get such
a property using Gelfand-Kazhdan trick. Denoting by $F$ some algebraically
closed field of characteristic different from $2$, we acquire the
following lemma.
\begin{lem}
Let $H<G$ be groups, $\pi$ irreducible representation of $G$ over
$F$, $(\pi^{*})^{H}\neq0$, $\sigma$ anti-involution which preserves
$H$ and the central character of $\pi$, and the number of elements
of $H\setminus G/Z(G)H$ which are not preserved under $\sigma$ is
$2k$, then $dim\pi^{H}\leq k+1$.
\end{lem}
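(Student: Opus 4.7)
The plan is to generalize the classical Gelfand trick by constructing an anti-involution on a matrix algebra and invoking the classification of anti-involutions on $M_d(F)$. Let $\chi$ denote the central character of $\pi$, and let $\mathcal{A}$ be the algebra (under convolution) of $H$-bi-invariant $F$-valued functions $f$ on $G$ satisfying $f(zg) = \chi(z)^{-1} f(g)$ for all $z \in Z(G)$. A spanning set of $\mathcal{A}$ is indexed by the double cosets in $H \backslash G / Z(G)H$, so $\dim \mathcal{A} \leq m + 2k$, where $m$ is the number of $\sigma$-fixed cosets. Since $\sigma$ preserves $H$, $Z(G)$ and $\chi$, the rule $(\sigma^* f)(g) := f(\sigma(g))$ defines an anti-involution of $\mathcal{A}$. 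Using $\operatorname{char} F \neq 2$, I decompose $\mathcal{A} = \mathcal{A}^+ \oplus \mathcal{A}^-$; each pair of cosets swapped by $\sigma$ contributes one dimension to each of $\mathcal{A}^{\pm}$, while $\sigma$-fixed cosets contribute to $\mathcal{A}^+$ by the compatibility of $\chi$ with $\sigma$. This yields the crucial bound $\dim \mathcal{A}^- \leq k$.

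Next I would exhibit $\pi^H$ as a simple $\mathcal{A}$-module (by standard Hecke-algebra arguments, given that $\pi$ is irreducible) and invoke Jacobson density to obtain a surjective algebra map $\rho : \mathcal{A} \twoheadrightarrow \operatorname{End}(\pi^H) \cong M_d(F)$, where $d = \dim \pi^H$. The crux of the argument is to descend $\sigma$ to an anti-involution $\tau$ of $M_d(F)$, i.e., to show $\ker \rho$ is $\sigma$-stable. This is where the hypothesis $(\pi^*)^H \neq 0$ intervenes: the natural duality pairing gives an $\mathcal{A}$-module identification $(\pi^H)^* \cong (\pi^*)^H$ up to the $\sigma$-twist of the action, and Schur's lemma applied to the simple $\mathcal{A}$-module $\pi^H$ over the algebraically closed field $F$ upgrades this to a non-degenerate bilinear form $B$ on $\pi^H$ with $\rho(\sigma(a)) = B^{-1} \rho(a)^T B$ for all $a \in \mathcal{A}$. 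This produces the required anti-involution $\tau(X) = B^{-1} X^T B$ on $M_d(F)$.

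Finally, the classification of anti-involutions of $M_d(F)$ shows that $\tau$ is, up to change of basis, of the form $X \mapsto S X^T S^{-1}$ with $S$ either symmetric or alternating, so $\dim M_d(F)^{-\tau} \geq d(d-1)/2$. Since $\rho$ is surjective and intertwines $\sigma$ with $\tau$, we have $\rho(\mathcal{A}^-) = M_d(F)^{-\tau}$, and therefore $d(d-1)/2 \leq \dim \mathcal{A}^- \leq k$. An elementary check shows this forces $d \leq k+1$, giving the claimed bound. The hardest step I anticipate is the descent: extracting from the mere non-vanishing of $(\pi^*)^H$ enough structural compatibility to produce $\tau$; the combinatorial dimension count of $\mathcal{A}^-$ and the matrix-algebra classification are routine bookkeeping and standard facts.
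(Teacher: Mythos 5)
Your strategy --- pass to the Hecke algebra $\mathcal{A}$, descend $\sigma$ to an anti-involution of $M_d(F)$, and invoke the classification of anti-involutions --- is a genuinely different route from the paper's, and in the semisimple setting it is a classical argument that would even give the sharper bound $d(d-1)/2\le k$. But as written it has two genuine gaps, and they sit precisely where the lemma's hypotheses are weakest. The lemma assumes only $\operatorname{char}F\neq2$, so the characteristic may divide $|H|$ or $|G|$. Then the idempotent $\frac{1}{|H|}\sum_{h\in H}h$ does not exist, and the steps you dismiss as ``standard Hecke-algebra arguments'' --- that $\pi^{H}$ is a simple $\mathcal{A}$-module and that $\rho:\mathcal{A}\to\operatorname{End}(\pi^{H})$ is surjective via Jacobson density --- are exactly the statements that fail in modular representation theory: the functor $\pi\mapsto\pi^{H}$ is no longer exact, the double-centralizer argument is unavailable, and the action of a double-coset indicator function on $\pi^{H}$ can even be zero.

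The second gap is the descent itself, which you correctly flag as the crux but do not actually carry out. The hypothesis supplies a single nonzero $\varphi\in(\pi^{*})^{H}$; the restriction of the canonical pairing $\pi^{*}\times\pi\to F$ to $(\pi^{*})^{H}\times\pi^{H}$ may be zero or degenerate, so ``Schur's lemma upgrades this to a non-degenerate bilinear form'' is not a proof --- you would first need the restricted pairing to be nonzero and $(\pi^{*})^{H}$ to be a simple $\mathcal{A}$-module of the same dimension $d$, neither of which follows from $(\pi^{*})^{H}\neq0$ alone. The paper sidesteps both issues by never forming a Hecke module: it maps $v\mapsto\varphi(\pi(\cdot)v)$ into the bi-invariant functions (injective simply because $\pi$ is $G$-irreducible and $\varphi\neq0$, with no averaging), notes the $\sigma$-odd part of that function space has dimension at most $k$, and then kills any two independent vectors with $\sigma$-even matrix coefficients by the left-kernel-equals-right-kernel argument of its Lemma 2.1. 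If you are willing to assume $\operatorname{char}F\nmid|G|$ and replace your Schur step by the canonical isomorphism $(\pi^{*})^{H}\cong(\pi^{H})^{*}$, your argument does close up and yields the stronger inequality $d(d-1)/2\le k$; in the stated generality it does not.
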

In some cases the assumption $(\pi^{*})^{H}\neq0$ can be removed
using the following lemma.
\begin{lem}
If the character of $F$ does not divide $|G|$, or if there exists
an anti-involution $\alpha$ of $G$ which preserves $H$ and conjugacy
classes, Then $dim(\pi^{H})=dim((\pi^{*})^{H})$.
\end{lem}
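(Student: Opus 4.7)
The proof splits along the disjunction in the hypothesis, with the two cases requiring quite different techniques.

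Case 1 ($\operatorname{char} F \nmid |G|$): Since $|H|$ is invertible in $F$, the operator $e_H = \frac{1}{|H|}\sum_{h\in H}\pi(h)$ is a projector onto $\pi^H$, so
$$\dim \pi^H = \operatorname{tr}(e_H) = \frac{1}{|H|}\sum_{h\in H}\chi_\pi(h).$$
Applying the same formula to $\pi^*$ yields $\dim(\pi^*)^H = \frac{1}{|H|}\sum_h \chi_\pi(h^{-1})$, and the bijective reindexing $h\mapsto h^{-1}$ on $H$ shows the two sums agree. This case is standard.

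Case 2 ($\alpha$ exists): My plan is to twist $\pi$ by $\alpha$ into a representation whose space of $H$-invariants visibly coincides with $\pi^H$ but whose character coincides with that of $\pi^*$, and then identify the twist with $\pi^*$ using the fact that over an algebraically closed field simple modules are determined by their characters. Concretely, define $\beta(g):=\alpha(g^{-1})$; composing an anti-automorphism with inversion reverses the reversal, so $\beta\in\operatorname{Aut}(G)$. Put $\widetilde\pi := \pi\circ\beta$. Since $\alpha(H)=H$, also $\beta(H)=H$, hence the sets of operators $\{\widetilde\pi(h)\}_{h\in H}$ and $\{\pi(h')\}_{h'\in H}$ coincide; in particular $\widetilde\pi^H = \pi^H$ as subspaces of $V$. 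Using that $\alpha$ preserves conjugacy classes (so $\chi_\pi\circ\alpha=\chi_\pi$),
$$\chi_{\widetilde\pi}(g) = \chi_\pi(\alpha(g^{-1})) = \chi_\pi(g^{-1}) = \chi_{\pi^*}(g).$$

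Now $\widetilde\pi$ and $\pi^*$ are both irreducible $F[G]$-modules (irreducibility is preserved by twisting by an automorphism and by taking the contragredient) with equal characters. By the standard consequence of the Artin-Wedderburn description of $F[G]/\operatorname{rad}F[G]$, valid in arbitrary characteristic when $F$ is algebraically closed, two simple modules with the same character are isomorphic. Therefore $\widetilde\pi \cong \pi^*$, giving $\dim \widetilde\pi^H = \dim(\pi^*)^H$, which together with $\widetilde\pi^H = \pi^H$ delivers the conclusion. The step requiring the most care is this character-determines-simple invocation in positive characteristic; the rest of the argument is bookkeeping once the correct twist $\widetilde\pi$ has been identified.
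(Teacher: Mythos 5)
Your Case 2 is correct and is essentially the paper's own argument: the paper likewise twists $\pi$ by $g\mapsto\pi(\alpha(g))^{-1}$ (which is exactly your $\pi\circ\beta$), identifies the twist with $\pi^{*}$ by a character argument, and then uses $\alpha(H)=H$ to see that the two spaces of $H$-invariants coincide. The only difference is in how the isomorphism $\widetilde\pi\cong\pi^{*}$ is certified: the paper compares Brauer (modular) characters and cites Brauer's theorem, while you compare ordinary trace characters and invoke linear independence of irreducible characters over a splitting field via Artin--Wedderburn. Both are valid in arbitrary characteristic for distinguishing \emph{simple} modules, so this part stands as written.

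The genuine gap is in Case 1. The identity $\dim\pi^{H}=\operatorname{tr}(e_{H})$ is an equality in $F$, not in $\mathbb{Z}$: the trace of a projector onto a $d$-dimensional subspace is $d\cdot 1_{F}$. When $\operatorname{char}F=p>0$ with $p\nmid|G|$ --- a case squarely within the scope of the lemma, since $F$ is only assumed algebraically closed of characteristic $\neq 2$ --- your computation yields only $\dim\pi^{H}\equiv\dim((\pi^{*})^{H})\pmod{p}$, which does not give the asserted equality of integers. The repair is short: since $|H|$ is invertible, $\pi|_{H}$ is semisimple and $\pi^{H}$ is precisely the trivial-isotypic summand, so $\dim\pi^{H}$ is the multiplicity $\langle\pi|_{H},1\rangle$; dualizing permutes isotypic components by $S\mapsto S^{*}$ and fixes the trivial one, whence the multiplicity of the trivial module in $\pi^{*}|_{H}$ equals that in $\pi|_{H}$. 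This is in substance what the paper does, writing $\dim\pi^{H}=\langle\pi,1\rangle$ and using $\langle\pi,\tau\rangle=\langle\tau^{*},\pi^{*}\rangle$ together with symmetry of intertwining numbers. With the trace identity replaced by this multiplicity argument, Case 1 is also fine.
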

Finally, we use the last two lemmas for $GL_{n+1}(F_{q})$,$GL_{n}(F_{q})$
using the natural anti-involution transpose, and getting the $k$
from the following lemma.
\begin{lem}
For $n\geq1$, the number of $GL_{n}(F_{q})$ double-cosets in $GL_{n+1}(F_{q})$
(up to the center of $GL{}_{n+1}(F_{q})$) which are not preserved
under transpose is 2.\end{lem}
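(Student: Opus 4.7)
The plan is to classify the $GL_n(F_q)$ double-cosets in $GL_{n+1}(F_q)$ modulo the center through canonical forms, and then observe how transposition permutes the resulting classes. Write a general element of $GL_{n+1}(F_q)$ as $g = \begin{pmatrix} A & b \\ c & d \end{pmatrix}$ with $A \in M_n(F_q)$, $b$ a column, $c$ a row, and $d \in F_q$. The block-embedded $GL_n \times GL_n$ action is $(P,Q) \cdot g = \begin{pmatrix} PAQ & Pb \\ cQ & d \end{pmatrix}$, while the central element $\lambda I_{n+1}$ scales all four blocks uniformly.

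First I would reduce $A$ to its rank normal form $\mathrm{diag}(I_r,0)$, where $r = \mathrm{rank}(A)$. A Schur-complement expansion of $\det g$ shows that $\det g = 0$ whenever $n - r \geq 2$, so invertibility of $g$ forces $r \in \{n-1,n\}$. In the $r = n$ stratum, fix $A = I_n$; the residual stabilizer becomes the diagonal $\{(P, P^{-1})\}$ acting as $b \mapsto Pb$, $c \mapsto cP^{-1}$, and a complete set of orbit invariants for $(b, c, d)$ consists of the two flags $[b = 0]$, $[c = 0]$, the scalar $s = cb$, and $d$, subject to $d \neq s$; the center further acts by $(s, d) \mapsto (\lambda s, \lambda d)$. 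In the $r = n-1$ stratum, fix $A = \mathrm{diag}(I_{n-1}, 0)$; the same determinant computation forces the last entries $b_n$ and $c_n$ to be nonzero, and the remaining stabilizer sends $(b, c)$ to the unique canonical form $(e_n, e_n^T)$, so $d$ is the only invariant and the center scales it.

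Now transposition sends $(A, b, c, d)$ to $(A^T, c^T, b^T, d)$, preserving $r$, $s$, and $d$ while swapping the two flags $[b = 0]$ and $[c = 0]$. Therefore a double-coset class mod center is fixed by transpose if and only if $[b = 0] = [c = 0]$. In the $r = n - 1$ stratum both flags are automatically false, so every such class is preserved. In the $r = n$ stratum the only classes with mismatched flags are $\{b = 0,\ c \neq 0,\ d \neq 0\}$ and $\{b \neq 0,\ c = 0,\ d \neq 0\}$, and each is a single class mod center (normalize $d = 1$). Because $d \neq 0$, the condition $d = \lambda d$ arising from the central factor forces $\lambda = 1$, so combining with the center cannot merge these two classes; pure transpose simply interchanges them. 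Hence exactly two classes fail to be preserved.

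The main obstacle I expect is the orbit analysis in the $r = n$ stratum, namely confirming that the two flags $[b = 0], [c = 0]$ together with the scalar $s = cb$ are genuinely a complete set of invariants for the diagonal $GL_n$-action on column-row pairs. This is the classical orbit description of $GL_n$ acting on $V \oplus V^*$, but care is needed to check that no finer invariant appears in the degenerate situation $s = 0$ with both $b$ and $c$ nonzero. Once this is pinned down, the flag-swap picture identifies the two exceptional classes essentially without further computation.
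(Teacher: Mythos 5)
Your argument is correct, but it reaches the count by a genuinely different route from the paper. The paper never classifies the double-cosets: after reducing $A$ to $I_n$ (or $\mathrm{diag}(I_{n-1},0)$) it asks directly whether the transpose of a representative lies back in the same coset, which boils down to producing a \emph{symmetric} invertible $B$ with $Bv=\varphi^{t}$ for arbitrary nonzero $v,\varphi$; that existence statement is the content of a separate, somewhat delicate inductive construction (the paper's Lemma 3.2). You instead exhibit a complete set of invariants for the double-cosets mod center --- the rank of $A$, the vanishing flags of $b$ and $c$, the pairing $s=cb$, and $d$ --- and simply read off that transpose fixes everything except that it swaps the two flags, so exactly the two classes with mismatched flags move. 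The key input you need is the classical orbit description of $GL_n$ on $V\oplus V^{*}$ (orbits of pairs of nonzero vector and covector are classified by the pairing value, including the degenerate case $s=0$ with both nonzero, which requires $n\geq 2$ and a short witness-basis argument), plus the transitivity of the residual stabilizer on $(b,c)$ in the rank-$(n-1)$ stratum; you correctly flag the former as the point needing care, and both are routine, but you should write them out. What your approach buys is that it bypasses the symmetric-matrix lemma entirely --- transitivity on pairs with a fixed pairing value needs no symmetric witness --- and it yields a full enumeration of the double-cosets as a byproduct; the paper's approach is more economical in that it only certifies preservation coset by coset, but pays for it with the explicit construction of a symmetric invertible matrix carrying one prescribed nonzero vector to another. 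One small remark: your observation that ``$d=\lambda d$ forces $\lambda=1$'' is not the relevant point when ruling out a merge via the center; what matters is that the central action preserves the two flags while transpose swaps them, which is already implicit in your setup.
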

\begin{cor}
For every $\pi$ irreducible representation of $GL_{n+1}(F_{q})$
over $F$ we have $dim(\pi^{GL_{n}(F_{q})})\leq2$.
\end{cor}
For the case $F=\mathbb{C}$ it can be derived from {[}8{]}.

A similar result can be obtained for the orthogonal case.
\begin{thm}
For $q$ not a power of $2$, $(O_{n+1}(F_{q}),O_{n}(F_{q}))$ is
a Gelfand pair over $F$.
\end{thm}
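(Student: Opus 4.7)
The plan is to apply Lemma 1.2 with $k=0$ using the inversion anti-involution $\sigma(g)=g^{-1}$, and to drop the hypothesis $(\pi^\ast)^H\neq 0$ by applying Lemma 1.3 with the same $\alpha=\sigma$. Set $G=O_{n+1}(F_q)$, let $V=F_q^{n+1}$ carry the non-degenerate quadratic form $Q$ (with associated symmetric bilinear form $B$) defining $G$, and realise $H=O_n(F_q)$ as the stabiliser of a non-isotropic vector $v_0\in V$. The map $\sigma$ clearly preserves $H$, and since $Z(G)\subseteq\{\pm I\}$ has exponent $2$, $\sigma$ fixes every central character pointwise. The task therefore reduces to (i) showing $\sigma$ fixes every double coset in $H\backslash G/H$ (giving $k=0$), and (ii) verifying the conjugacy-class hypothesis of Lemma 1.3.

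For (i), identify $G/H$ with the $G$-orbit of $v_0$ via $gH\mapsto gv_0$, set $c=Q(v_0)\neq 0$, and write $gv_0=\alpha v_0+w$, $g^{-1}v_0=\alpha' v_0+w'$ with $w,w'\in v_0^\perp$. Using that $g^{-1}$ is the $B$-adjoint of $g$ and $B$ is symmetric, $c\alpha'=B(g^{-1}v_0,v_0)=B(v_0,gv_0)=c\alpha$, so $\alpha'=\alpha$; and the identities $Q(gv_0)=Q(g^{-1}v_0)=c$ give $Q'(w)=Q'(w')=c(1-\alpha^2)$, where $Q'=Q|_{v_0^\perp}$. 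By Witt's extension theorem applied to the non-degenerate space $(v_0^\perp,Q')$, the group $H=O(v_0^\perp)$ acts transitively on vectors of any fixed norm in $v_0^\perp$, with the sole subtlety that the zero vector forms its own orbit. This degeneracy cannot occur here: if $w=0$ then $gv_0=\alpha v_0$ and the norm identity forces $\alpha=\pm 1$, so $g^{-1}v_0=\alpha^{-1}v_0=\alpha v_0$ and $w'=0$, with the reverse implication being symmetric. Hence $w$ and $w'$ always lie in the same $H$-orbit, so $HgH=Hg^{-1}H$ for every $g\in G$.

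For (ii), the same $\sigma$ plays the role of $\alpha$ in Lemma 1.3: it preserves $H$, and preserves conjugacy classes precisely when $G$ is ambivalent, i.e.\ when every element is conjugate to its inverse. For $q$ odd this ambivalence of $O_{n+1}(F_q)$ is classical and can be read off Wall's parametrisation of conjugacy classes in classical groups, the key point being that the spectrum of an orthogonal matrix is stable under $\lambda\mapsto\lambda^{-1}$ (with $\pm 1$ self-paired), so the conjugacy invariants of $g$ and $g^{-1}$ agree. Lemma 1.3 then gives $\dim\pi^H=\dim(\pi^\ast)^H$, so $\pi^H\neq 0$ implies $(\pi^\ast)^H\neq 0$ and Lemma 1.2 yields $\dim\pi^H\leq k+1=1$; the case $\pi^H=0$ is trivial. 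Hence $(G,H)$ is a Gelfand pair.

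The main obstacle is step (i), specifically the handling of the isotropic corner where Witt's theorem alone would not distinguish $w=0$ from a non-zero isotropic $w$; step (ii) is routine once one cites the ambivalence of finite orthogonal groups in odd characteristic, and the hypothesis that $q$ is not a power of $2$ is used precisely to ensure non-degeneracy of $Q'$, the applicability of Witt's theorem, and this ambivalence.
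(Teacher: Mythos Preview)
Your proof is correct, and the overall strategy---apply Lemma~1.2 with $k=0$ and remove the hypothesis $(\pi^*)^H\neq 0$ via Lemma~1.3 using the same anti-involution---matches the paper's. Note that on $O_{n+1}(F_q)$ with the standard form one has $g^T=g^{-1}$, so the paper's choice $\sigma=\text{transpose}$ is literally the same map as your $\sigma(g)=g^{-1}$.

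The difference is in how the two hypotheses are verified. For step~(i), the paper identifies $H\backslash G/H$ with $G\backslash(G/H\times G/H)$, interprets $\sigma$ as the swap $(u,v)\mapsto(v,u)$ on pairs of unit vectors, and exhibits an explicit reflection in $G$ carrying $(u,v)$ to $(v,u)$ (treating the cases $\langle u-v,u-v\rangle\neq 0$ and $=0$ separately). You instead pass to the $H$-orbits on $G/H$, compute that $gv_0$ and $g^{-1}v_0$ have the same $v_0$-component and the same orthogonal norm, and invoke Witt's extension theorem. Your route is more structural and generalises cleanly; the paper's is entirely elementary and self-contained (and is essentially the one-step reflection proof of the relevant case of Witt's theorem). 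For step~(ii), the paper simply declares that $\alpha=\text{transpose}$ satisfies the hypothesis of Lemma~1.3 without checking that transpose (equivalently inversion) preserves conjugacy classes in $O_{n+1}(F_q)$; you correctly identify this as ambivalence of the finite orthogonal group and cite Wall. A crisper reference here is Wonenburger's theorem that in characteristic $\neq 2$ every element of $O(V)$ is a product of two involutions, which yields ambivalence immediately.
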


\subsection{Acknowledgments}

The work was carried out under the guidance of Dmitry Gourevitch,
as part of the Kupcinet-Getz program at the Weizmann Institute of
Science. I owe a great deal of gratitude to Dmitry Gourevitch for
teaching me all I know about representation theory, and for his guidance
throughout this project.

\section{Weak Gelfand Pair Property}

We want to see some kind of ``weak'' Gelfand Pair property.
\begin{lem}
Let $G$ be a group, $\pi$ irreducible representation of $G$ over
$F$, $\sigma$ anti-involution, then for every $v_{1},v_{2}\in\pi,0\neq\varphi\in\pi^{*}$
if the matrix coefficients $\varphi(\pi(g)v_{1}),\varphi(\pi(g)v_{2})$
lie in $F(G)^{\sigma}$ then $v_{1},v_{2}$ are linearly dependent.\end{lem}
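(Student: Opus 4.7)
The plan is to interpret the condition $M_{v,\varphi} \in F(G)^\sigma$ as an algebraic constraint on the group algebra side. Introduce the matrix coefficient map $\Phi : \pi \to F(G)$ defined by $\Phi(v)(g) := \varphi(\pi(g)v)$. This is $G$-equivariant for the right regular action on $F(G)$, and is injective because $\pi$ is irreducible and $\varphi \neq 0$ (its kernel would be a proper $G$-subrepresentation of $\pi$ on which $\varphi$ vanishes). Setting $W := \Phi^{-1}(F(G)^\sigma)$, the hypothesis places $v_1, v_2 \in W$, and the conclusion is equivalent to $\dim W \leq 1$.

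Extending $\sigma$ linearly gives an anti-involution $\sigma_*$ of the group algebra $F[G]$, and using $\mathrm{char}\, F \neq 2$ we decompose $F[G] = F[G]^\sigma \oplus F[G]^{-\sigma}$. By linearity the condition $v \in W$ becomes $\varphi(\pi(x)v) = 0$ for every $x \in F[G]^{-\sigma}$. By Jacobson density, $\pi(F[G]) = \mathrm{End}_F(\pi)$, so letting $N := \pi(F[G]^{-\sigma}) \subseteq \mathrm{End}_F(\pi)$ and using the trace pairing on $\mathrm{End}_F(\pi)$ to identify functionals with endomorphisms, the condition is equivalent to the rank-one endomorphism $v \otimes \varphi$ lying in the trace-orthogonal complement $N^\perp$ of $N$.

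The heart of the proof is the analysis of $N$. I would split according to whether $\sigma_*$ preserves $\ker\pi$, equivalently whether $\pi$ is isomorphic to the twisted irreducible $\tilde\pi(g) := \pi(\sigma(g)^{-1})$. If $\sigma_*$ does not preserve $\ker\pi$, a standard argument with the ideal structure of $F[G]$ (transparent via Artin--Wedderburn when $\mathrm{char}\, F$ is coprime to $|G|$) yields $N = \mathrm{End}_F(\pi)$, so $N^\perp = 0$ and $W = 0$. If $\sigma_*$ does preserve $\ker\pi$, it descends to an anti-involution $\bar\sigma$ of $\mathrm{End}_F(\pi) \cong M_d(F)$, and $N$ is precisely its $(-1)$-eigenspace. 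Since $\bar\sigma$ preserves traces, its $\pm 1$-eigenspaces are trace-orthogonal (again requiring $\mathrm{char}\, F \neq 2$), so $N^\perp = \mathrm{End}_F(\pi)^{\bar\sigma}$. By the classical classification, $\bar\sigma(X) = TX^tT^{-1}$ for an invertible $T$ with $T^t = \pm T$, and a direct rank-one computation shows that $v \otimes \varphi \in \mathrm{End}_F(\pi)^{\bar\sigma}$ forces $v$ to lie in the one-dimensional subspace $F \cdot T\varphi$ (when $T$ is symmetric) or to vanish entirely (when $T$ is skew). Either way $\dim W \leq 1$.

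The main obstacle is verifying the non-preserving case $N = \mathrm{End}_F(\pi)$, which is clean in the semisimple setting via Artin--Wedderburn but needs some additional care for modular characteristic; the preserving case reduces to a routine linear-algebra computation once one invokes the structural description of anti-involutions on matrix algebras and the trace-orthogonality of their eigenspaces.
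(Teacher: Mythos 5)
Your proof is correct, but it takes a genuinely different and more structural route than the paper. The paper's argument is the classical Gelfand--Kazhdan kernel comparison: it extends the matrix coefficients to bilinear forms $B_{i}(\alpha_{1},\alpha_{2})=\varphi(\pi(\alpha_{1})\pi(\alpha_{2})v_{i})$ on the group algebra, uses the $\sigma$-invariance to rewrite $B_{i}(\alpha_{1},\alpha_{2})=B_{i}(\sigma(\alpha_{2}),\sigma(\alpha_{1}))$, deduces from irreducibility that the left kernels of $B_{1},B_{2}$ coincide and hence so do the right kernels, and concludes that $v_{1},v_{2}$ have the same annihilator in $F[G]$ and are therefore proportional by Schur's lemma. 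That argument is shorter, needs no case analysis, and does not even use $\mathrm{char}\,F\neq2$ for this particular lemma. Your route instead decomposes $F[G]$ into $\sigma_{*}$-eigenspaces, pushes everything into $\mathrm{End}_{F}(\pi)$ via Burnside/Jacobson density, and analyses $N=\pi(F[G]^{-\sigma})$ through the trace pairing and the Skolem--Noether classification of anti-involutions of $M_{d}(F)$; this costs more machinery and the hypothesis $\mathrm{char}\,F\neq2$, but buys strictly more: it identifies the invariant space $W$ exactly (either $0$ or the line $F\cdot T\varphi$) and exhibits the orthogonal/symplectic dichotomy of $\pi$. Two small remarks. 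First, the step you flag as the main obstacle --- that $N=\mathrm{End}_{F}(\pi)$ when $\sigma_{*}(\ker\pi)\neq\ker\pi$ --- requires no extra care in modular characteristic: by Burnside $F[G]/\ker\pi\cong M_{d}(F)$ is simple, so $\ker\pi$ and $J:=\sigma_{*}(\ker\pi)$ are distinct maximal two-sided ideals, whence $\pi(J)=M_{d}(F)$; given any target $T$, choose $w\in J$ with $\pi(w)=T$, and then $w-\sigma_{*}(w)\in F[G]^{-\sigma}$ maps to $T-0=T$. Second, your parenthetical identification of ``$\sigma_{*}$ preserves $\ker\pi$'' with ``$\pi\cong\tilde\pi$'' hides an inversion twist (it really compares $\sigma_{*}(\ker\pi)$ with $\ker\pi^{*}$), but since your proof only uses the ideal-theoretic dichotomy, nothing is affected.
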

\begin{proof}
Define $B_{i}(\alpha_{1},\alpha_{2})=\varphi(\pi(\alpha_{1})\pi(\alpha_{2})v_{i})=\varphi(\pi(\alpha_{1}\star\alpha_{2})v_{i})=\varphi(\pi(\sigma(\alpha_{1}\star\alpha_{2}))v_{i})=\varphi(\pi(\sigma(\alpha_{2}))\pi(\sigma(\alpha_{1}))v_{i})$.
Since $\pi$ is irreducible, if $v_{i}\neq0$ then $\pi(\alpha_{2})v_{i}$
ranges over all $\pi$, and so the left kernels of $B_{i}$ are equal.
From the right hand side of the last equation, the right kernels are
equal too, i.e. $v_{1},v_{2}$ are in the kernels of the exact same
linear functionals and so $v_{i}$ are linearly dependent (since for
every two independent vectors there is a linear functional that sends
one to zero and the other not).\end{proof}
\begin{lem}
Let $H<G$ be groups, $\pi$ irreducible representation of $G$ over
$F$, $(\pi^{*})^{H}\neq0$, $\sigma$ anti-involution which preserves
$H$ and the central character of $\pi$, and the number of elements
of $H\setminus G/Z(G)H$ which are not preserved under $\sigma$ is
$2k$, then $dim\pi^{H}\leq k+1$.\end{lem}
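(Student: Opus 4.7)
The plan is to bound $\dim\pi^H$ by studying matrix coefficients. I would fix a nonzero $\varphi\in(\pi^*)^H$ (provided by hypothesis) and define $\Phi\colon\pi^H\to F(G)$ by $\Phi(v)(g)=\varphi(\pi(g)v)$. Irreducibility of $\pi$ makes $\Phi$ injective: if $v\neq 0$ the translates $\pi(g)v$ span $\pi$, so the nonzero $\varphi$ cannot vanish on all of them. Moreover, each $\Phi(v)$ is bi-$H$-invariant (since $v\in\pi^H$ and $\varphi\in(\pi^*)^H$) and transforms by $\chi_\pi$ under $Z(G)$; let $V\subset F(G)$ denote the subspace of functions with these two properties. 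Thus it suffices to bound $\dim\Phi(\pi^H)\leq\dim V$.

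Since $\sigma$ preserves $H$ and $\chi_\pi$, the pullback $(\sigma f)(g):=f(\sigma(g))$ is a well-defined involution on $V$, and as $\mathrm{char}(F)\neq 2$ one has a decomposition $V=V^+\oplus V^-$ into $\pm 1$-eigenspaces. By Lemma~2.1, any two $v_1,v_2\in\pi^H$ with $\Phi(v_i)\in V^+=F(G)^\sigma\cap V$ are linearly dependent, so $\Phi(\pi^H)\cap V^+$ has dimension at most $1$. Projecting $\Phi(\pi^H)\subseteq V^+\oplus V^-$ onto the $V^-$-factor has kernel $\Phi(\pi^H)\cap V^+$, yielding $\dim\pi^H\leq 1+\dim V^-$.

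The remaining task is to prove $\dim V^-\leq k$. I would decompose $V$ according to the $\sigma$-orbits on $X=H\backslash G/Z(G)H$. Each of the $k$ paired orbits $\{C,\sigma(C)\}$ contributes a two-dimensional subspace of $V$ on which $\sigma$ acts as an exchange, splitting as one $+$-eigenvector plus one $-$-eigenvector and giving $k$ dimensions in total to $V^-$. Each $\sigma$-fixed coset $HgZ(G)H$ contributes a one-dimensional subspace on which $\sigma$ acts by multiplication by $\chi_\pi(z_0)$, where $z_0\in Z(G)$ is determined by $\sigma(g)=h_1gh_2z_0$ with $h_i\in H$. Applying $\sigma$ once more and using $\sigma^2=\mathrm{id}$ together with $\chi_\pi\circ\sigma=\chi_\pi$ yields $\chi_\pi(z_0)^2=1$, and a careful choice of representative in the coset should pin the sign down to $+1$, so that fixed cosets contribute only to $V^+$. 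Consequently $\dim V^-=k$, which combined with the previous paragraph gives $\dim\pi^H\leq k+1$.

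The hard part will be the last step---showing $\chi_\pi(z_0)=+1$ on every $\sigma$-fixed coset---since the straightforward calculation only yields $\chi_\pi(z_0)=\pm 1$, and one still needs to rule out the $-1$ case by exploiting the freedom in choosing the coset representative and the factors $h_1,h_2,z_0$. The rest of the argument is a direct application of Lemma~2.1 together with the orbit-wise description of $V$ under $\sigma$.
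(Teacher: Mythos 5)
Your argument tracks the paper's own proof almost exactly: the paper likewise maps $\pi^{H}$ into the space $F(G)^{H\times H\times(Z(G),Z_{\pi})}$ via $v\mapsto\varphi(\pi(g)v)$, splits it into $\sigma$-even and $\sigma$-odd parts, and uses Lemma 2.1 to see that the image meets the even part in dimension at most one. Everything up to the inequality $\dim\pi^{H}\leq 1+\dim V^{-}$ is correct and is exactly what the paper does.

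The genuine gap is the step you yourself flag, and it cannot be closed the way you hope. The sign $\chi_{\pi}(z_{0})$ attached to a $\sigma$-fixed class is an invariant of the class, not of the representative: if $\sigma(g)=h_{1}gz_{0}h_{2}$ and you replace $g$ by $hgzh'$, the new central factor is $z_{0}\sigma(z)z^{-1}$, and $\chi_{\pi}(z_{0}\sigma(z)z^{-1})=\chi_{\pi}(z_{0})$ because $\sigma$ preserves the central character. Moreover the sign can genuinely be $-1$, in which case the statement itself fails: take $G=Q_{8}$, $H=\{1\}$, $\sigma(g)=g^{-1}$, and $\pi$ the two-dimensional irreducible representation. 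Then $\sigma$ preserves $H$, fixes $Z(G)=\{\pm1\}$ pointwise (hence the central character), and fixes every class in $G/Z(G)$ (e.g.\ $i^{-1}=-i$), so $k=0$; yet $\dim\pi^{H}=2>k+1$. Here $z_{0}=-1$ on the classes of $i,j,k$, so $\dim V^{-}=3$ while $k=0$. You should know that the paper's proof has exactly the same hole --- it simply asserts that ``the space of odd functions is of dimension $k$'' --- so you have correctly located the weak point rather than overlooked it. To make the lemma true one must add a hypothesis forcing $\chi_{\pi}(z_{0})=1$ on every $\sigma$-fixed class, for instance that each $\sigma$-fixed element of $H\backslash G/Z(G)H$ contains some $g$ with $\sigma(g)\in HgH$; this does hold in the intended application to $GL_{n+1},GL_{n}$ with $\sigma$ the transpose, since Lemma 3.1 produces $BA'C=(A')^{t}$ with no central factor, so the corollary survives.
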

\begin{proof}
Assume $dim\pi^{H}\geq k+2$. Note that by Schur's lemma the center
of $G$ acts by scalars on $\pi$. Let us denote the central character
by $Z_{\pi}$, and by $F(G)^{H\times H\times(Z(G),Z_{\pi})}$ the
space of $H\times H$ invariant functions in $F(G)$ such that multiplication
by an element of $Z(G)$ multiplies the result by the corresponding
scalar in $Z_{\pi}$. Using the assumption that the characteristic
of $F$ is not $2$, we can write this space in terms of even and
odd function: $(F(G)^{H\times H\times(Z(G),Z_{\pi})})^{\sigma}\oplus(F(G)^{H\times H\times(Z(G),Z_{\pi})})^{\sigma_{-}}$.
Note that the the space of odd functions is of dimension $k$. Now
take $0\neq\varphi\in(\pi^{*})^{H}$ and $k+2$ independent vectors
$v_{1},v_{2},\ldots,v_{k+2}\in\pi^{H}$. They create elements of $F(G)^{H\times H\times(Z(G),Z_{\pi})}$
by $\varphi(\pi(g)v_{i})$, and by lemma 2.1 they are linearly independent
in $F(G)^{H\times H\times(Z(G),Z_{\pi})}$, and so its dimension is
at least $k+2$. So we can take two linearly independent vectors $u_{1},u_{2}$
as a linear combination of the $v_{i}$ such that $\varphi(\pi(g)u{}_{i})$
are in $(F(G)^{H\times H\times(Z(G),Z_{\pi})})^{\sigma}$. But then
from the previous lemma we have linear dependency of $u_{1},u_{2}$
- contradiction.
\end{proof}
Finally, we want to show that the assumption $(\pi^{*})^{H}\neq0$
is not needed in some cases, with the following lemma.
\begin{lem}
If the character of $F$ does not divide $|G|$ then $dim(\pi^{H})=dim((\pi^{*})^{H})$.\end{lem}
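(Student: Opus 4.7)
The plan is to compute both dimensions via the averaging idempotent $e_H = \frac{1}{|H|}\sum_{h\in H} h \in F[G]$, which is well-defined because the hypothesis that $\mathrm{char}(F) \nmid |G|$ forces $|H|$ to be invertible in $F$.

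First I would observe that $\pi(e_H)$ is an idempotent endomorphism of $\pi$ whose image is exactly $\pi^H$: a vector $v$ is $H$-fixed iff $\pi(e_H)v = v$, and $\pi(e_H)v$ is $H$-fixed for every $v$. Since the trace of any idempotent on a finite-dimensional vector space equals the dimension of its image (pure linear algebra, valid in any characteristic), one obtains
\[
\dim \pi^H \;=\; \mathrm{tr}(\pi(e_H)) \;=\; \frac{1}{|H|}\sum_{h\in H}\chi_\pi(h),
\]
where $\chi_\pi$ denotes the character of $\pi$.

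The same computation applied to $\pi^*$, together with the standard identity $\chi_{\pi^*}(g) = \chi_\pi(g^{-1})$, yields
\[
\dim (\pi^*)^H \;=\; \frac{1}{|H|}\sum_{h\in H}\chi_\pi(h^{-1}).
\]
Reindexing this sum via the bijection $h \mapsto h^{-1}$ of $H$ converts it back to $\frac{1}{|H|}\sum_{h\in H}\chi_\pi(h)$, which equals $\dim \pi^H$.

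There is no substantive obstacle in this argument; the only point worth emphasizing is that one does not need Maschke semisimplicity of $\pi$ or of $\pi|_H$. The proof relies solely on the existence of the idempotent $e_H$ (which requires exactly that $|H|$ be a unit in $F$) and on the trace-equals-rank formula for projectors, both of which are characteristic-free once $|H|$ is invertible.
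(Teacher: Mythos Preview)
Your argument is clean in characteristic zero but has a gap when $\mathrm{char}(F)=p>0$. The identity ``trace of an idempotent equals the dimension of its image'' holds only as an equality \emph{in $F$}: the trace is the image of the integer $\dim\pi^H$ under the map $\mathbb{Z}\to F$. Your reindexing therefore shows only that $\dim\pi^H$ and $\dim(\pi^*)^H$ have the same image in $F$, i.e.\ are congruent modulo $p$; nothing in the hypotheses bounds these dimensions below $p$ (for instance $G=S_6$ with $p=7$ already has an irreducible of dimension $16$), so you cannot conclude equality of the integers themselves from equality of the traces.

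The fix stays entirely within your idempotent framework: realise $\pi^*$ in matrix form via $\pi^*(g)=\pi(g^{-1})^{T}$, so that $\pi^*(e_H)=\bigl(\tfrac{1}{|H|}\sum_{h}\pi(h^{-1})\bigr)^{T}=\pi(e_H)^{T}$, and compare \emph{ranks} instead of traces. Transpose preserves rank over any field, giving $\dim(\pi^*)^H=\mathrm{rank}\,\pi(e_H)^{T}=\mathrm{rank}\,\pi(e_H)=\dim\pi^H$ directly. The paper takes a different route: it uses Maschke to identify $\dim\pi^H$ with the intertwining number $\langle\pi,1\rangle$ and then invokes $\langle\pi,\tau\rangle=\langle\tau^*,\pi^*\rangle$ together with $1^*=1$ and symmetry of intertwining numbers. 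Once corrected as above, your rank argument is arguably the more elementary of the two, since it never decomposes $\pi|_H$ into irreducibles.
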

\begin{proof}
Look at $\pi,\pi^{*}$ as representations of $H$. Notice that a preserved
vector under $H$ corresponds to the identity representation as a
sub-representation. In that sense, we get:
\[
dim(\pi^{H})=<\pi,1>,dim((\pi^{*})^{H})=<\pi^{*},1>
\]
and since $1=1^{*}$ and for every $\pi,\tau$ representations we
have $<\pi,\tau>=<\tau^{*},\pi^{*}>$ we get $dim(\pi^{H})=dim((\pi^{*})^{H})$
(using symmetry of intertwining number over $F$).
\end{proof}
This lemma shows us the assumption $(\pi^{*})^{H}\neq0$ is not needed
in specific cases, but we still want to give an easier constraint
than $(\pi^{*})^{H}\neq0$ for the general case. We can do that by
the following lemmas.
\begin{lem}
Let $\alpha\neq0$ be an anti-involution of $G$ which preserves conjugacy
classes. Define a representation of $G$ on the space of $\pi$ by
$\pi'(g)=\pi(\alpha(g))^{-1}$. Then $\pi^{*}\cong\pi'$.\end{lem}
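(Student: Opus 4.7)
The plan is to reinterpret $\pi'$ as the twist of $\pi$ by an honest group automorphism, equate its character with that of $\pi^{*}$, and conclude via the fact that characters separate irreducible representations over $F$.

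Set $\beta(g) := \alpha(g^{-1})$. The anti-involution identity $\alpha(gh)=\alpha(h)\alpha(g)$ gives $\beta(gh) = \alpha(h^{-1}g^{-1}) = \alpha(g^{-1})\alpha(h^{-1}) = \beta(g)\beta(h)$, and $\beta^{2}=\mathrm{id}$ since $\alpha^{2}=\mathrm{id}$ and inversion is involutive, so $\beta$ is a group automorphism of $G$. Moreover $\alpha(g^{-1})\alpha(g)=\alpha(e)=e$ shows $\beta(g)=\alpha(g)^{-1}$, and hence $\pi'(g) = \pi(\alpha(g))^{-1} = \pi(\beta(g))$. Thus $\pi' = \pi \circ \beta$ is visibly a representation on the same underlying space, and it is irreducible because $\beta$ is a bijection and $\pi$ is irreducible.

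Next I would match characters: $\chi_{\pi'}(g) = \chi_{\pi}(\beta(g)) = \chi_{\pi}(\alpha(g^{-1}))$. Since $\alpha$ preserves conjugacy classes, $\alpha(g^{-1})$ lies in the conjugacy class of $g^{-1}$, so $\chi_{\pi}(\alpha(g^{-1})) = \chi_{\pi}(g^{-1}) = \chi_{\pi^{*}}(g)$. Hence $\chi_{\pi'} = \chi_{\pi^{*}}$ identically on $G$, and since both are irreducible representations of $G$ over the algebraically closed field $F$ with the same character, they are isomorphic.

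The one genuine subtlety is the final appeal: characters distinguish isomorphism classes of irreducible representations exactly when $\mathrm{char}(F) \nmid |G|$, which is the standard hypothesis under which the preceding ordinary character theory is valid and is presumably the setting the lemma is applied in. Were this hypothesis to be avoided, the honest way to proceed would be to construct the intertwiner $V \to V^{*}$ by hand: produce a non-zero bilinear form $B$ on $V \times V$ satisfying $B(\pi(\alpha(g))v, w) = B(v, \pi(g)w)$ (e.g.\ by averaging any such $B_{0}$ over $G$), then use Schur's lemma on its left and right kernels, which are $G$-invariant, to upgrade non-zero to non-degenerate and so get the required isomorphism $\pi'\cong\pi^{*}$.
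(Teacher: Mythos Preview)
Your argument and the paper's follow the same outline---show that $\pi'$ and $\pi^{*}$ have the same character and then invoke the fact that characters separate irreducibles---but diverge precisely at the point you flag. The lemma is stated over an algebraically closed field $F$ of characteristic $\neq 2$ with no assumption that $\operatorname{char}(F)\nmid|G|$; indeed its role in the paper is to cover the modular situation left open by the preceding lemma. The paper therefore compares \emph{Brauer (modular) characters} rather than ordinary traces: since $\alpha$ preserves conjugacy classes, $\pi(\alpha(g))$ is conjugate to $\pi(g)$ and hence shares its eigenvalues, as does $\pi(g)^{t}$; thus $\pi'(g)=\pi(\alpha(g))^{-1}$ and $\pi^{*}(g)=(\pi(g)^{t})^{-1}$ have identical eigenvalues on every $p$-regular element, hence equal Brauer characters, and Brauer's theorem gives $\pi'\cong\pi^{*}$ in arbitrary characteristic.

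Your main line via ordinary traces is therefore only valid under the extra hypothesis $\operatorname{char}(F)\nmid|G|$, as you correctly note. The proposed workaround---averaging an arbitrary bilinear form $B_{0}$ over $G$ to manufacture an invariant one---does not rescue the modular case: the sum $\sum_{g\in G}B_{0}(\pi(\alpha(g))\,\cdot\,,\pi(g)\,\cdot\,)$ may well vanish identically (you cannot divide by $|G|$, and nothing forces the sum to be nonzero), so this does not actually exhibit a nonzero intertwiner $\pi'\to\pi^{*}$. The clean fix is the paper's: replace ordinary characters by Brauer characters, where your eigenvalue computation via the automorphism $\beta$ carries over verbatim.
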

\begin{proof}
We will show that the modular characters of $\pi^{*},\pi'$ are equal,
and hence (from a theorem by R. Brauer) they are isomorphic. $\pi^{*}$
can be defined as $(\pi(g)^{t})^{-1}$ acting on the space of $\pi$.
Hence we need to show that the modular characters of $\pi(g)^{t},\pi(\alpha(g))$
are equal, which means that they have the same eigenvalues. Since
$\alpha$ preserves conjugacy classes, there is some $a\in G$ such
that $\alpha(g)=aga^{-1}$, which gives us
\[
\pi(\alpha(g))=\pi(aga^{-1})=\pi(a)\pi(g)\pi(a^{-1})=\pi(a)\pi(g)\pi(a)^{-1}
\]

which means that $\pi(g),\pi(\alpha(g))$ are conjugate, and hence
have the same eigenvalues. Finally, we know transpose preserves eigenvalues
and we get $\pi^{*}\cong\pi'$.
\end{proof}
If $\alpha$ also preserves $H$ we get the wanted result.
\begin{lem}
Let $\alpha$ be an anti-involution of $G$ which preserves $H$ and
conjugacy classes. Then $dim(\pi^{H})=dim((\pi^{*})^{H})$.\end{lem}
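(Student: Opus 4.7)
The plan is simply to combine the preceding lemma with a direct inspection of how $H$-invariance transports under the twist. By the previous lemma, $\pi^{*} \cong \pi'$ as representations of $G$, where $\pi'(g) := \pi(\alpha(g))^{-1}$. Thus $\dim((\pi^{*})^{H}) = \dim((\pi')^{H})$, and it suffices to identify $(\pi')^{H}$ with $\pi^{H}$ inside the common underlying vector space.

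For this I would unwind the definitions: a vector $v$ lies in $(\pi')^{H}$ exactly when $\pi(\alpha(h))^{-1} v = v$ for every $h \in H$, equivalently $\pi(\alpha(h)) v = v$ for every $h \in H$. Since $\alpha$ is an anti-involution of $G$ preserving $H$, its restriction $\alpha|_{H}$ is a bijection of $H$ onto itself (it is injective because $\alpha^{2}=\operatorname{id}$, and surjective because $\alpha(H) \subseteq H$ together with $|H|<\infty$). Hence as $h$ ranges over $H$ so does $\alpha(h)$, and the condition above is equivalent to $\pi(h')v = v$ for every $h' \in H$. This gives $(\pi')^{H} = \pi^{H}$ as subspaces, and chaining with the isomorphism $\pi^{*}\cong\pi'$ yields $\dim(\pi^{H})=\dim((\pi^{*})^{H})$.

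There is really no main obstacle here: the heavy lifting was already carried out in the preceding lemma, which used Brauer's theorem to match the modular characters of $\pi^{*}$ and $\pi'$. The only subtlety in the present argument is the bijectivity of $\alpha|_{H}$, and that follows immediately from the anti-involution hypothesis plus the preservation of $H$.
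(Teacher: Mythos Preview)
Your proof is correct and follows exactly the same approach as the paper: reduce to comparing $\pi^{H}$ with $(\pi')^{H}$ via the preceding lemma, then observe that $H$-invariance for $\pi'$ and for $\pi$ coincide because $\alpha$ permutes $H$. The paper compresses your unwinding into the single chain $\pi(\alpha(H))^{-1}=\pi(H)^{-1}=\pi(H^{-1})=\pi(H)$, but the content is identical.
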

\begin{proof}
From lemma 3.1 it's enough to show that $dim(\pi^{H})=dim((\pi')^{H})$
which is obvious because for a vector to be preserved under $H,$
it needs to be preserved under
\[
\pi(\alpha(H))^{-1}=\pi(H)^{-1}=\pi(H^{-1})=\pi(H)
\]

\end{proof}
Notice that if we take $\alpha=\sigma$ then the only assumption we
need to add to lemma 2.2 instead of $(\pi^{*})^{H}\neq0$ is that
$\sigma$ preserves conjugacy classes.

\section{Application To GL(n+1),GL(n) Over Finite Fields}

To use lemma 2.2 for $GL_{n+1}(F_{q})$,$GL_{n}(F_{q})$ we need to
calculate $k$.
\begin{lem}
For $n\geq1$, the number of $GL_{n}(F_{q})$ double-cosets in $GL_{n+1}(F_{q})$
(up to the center of $GL{}_{n+1}(F_{q})$) which are not preserved
under transpose is 2 (i.e. $k=1$).\end{lem}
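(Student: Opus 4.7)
The plan is to stratify $G=GL_{n+1}(F_q)$ via the block decomposition $g=\begin{pmatrix}A & b\\c^{T} & d\end{pmatrix}$, with $A\in M_n(F_q)$, $b,c\in F_q^{n}$, $d\in F_q$, and study the $H\times H$-action, which sends $g$ to $\begin{pmatrix}\tilde h_1 A\tilde h_2^{-1} & \tilde h_1 b\\c^{T}\tilde h_2^{-1} & d\end{pmatrix}$ for $h_i=\mathrm{diag}(\tilde h_i,1)$. I split into four cases based on whether $b$ and $c$ vanish: Case I ($b=c=0$), Case II ($b\neq 0,c=0$), Case III ($b=0,c\neq 0$), and Case IV ($b,c\neq 0$).

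The first three cases are handled directly. Case I consists of block-diagonal matrices $\mathrm{diag}(A,d)$ with $A\in GL_n$, $d\in F_q^{\times}$, and under the full $H\times H$-action $A$ collapses to one orbit; modding out by $Z(G)$ also absorbs $d$, yielding a single coset, clearly transpose-fixed. For Case II, normalizing $b=e_1$ and then $A=I_n$ leaves only $d\in F_q^{\times}$ as invariant, which then collapses mod $Z(G)$; the analogous thing happens for Case III. Since transpose takes $(A,b,0,d)$ to $(A^{T},0,b,d)$, it exchanges Case II with Case III. This already identifies one swapped pair of cosets, giving at least two non-fixed ones.

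The crux is to show that every orbit in Case IV is fixed by transpose modulo $Z(G)$. I would prove the stronger statement that each Case IV coset contains a symmetric representative $g_0=g_0^{T}$, from which transpose-invariance of the coset is immediate: if $g_0=l_1 g l_2$ with $l_i\in L:=Z(G)H$, then $g^{T}=(l_2^{T})^{-1}l_1 g l_2(l_1^{T})^{-1}\in LgL$, and $L$-double cosets coincide with $H$-double cosets modulo $Z(G)$. Writing $l_i=\mathrm{diag}(P_i,\mu_i)\in L$, the symmetry condition on $l_1 g l_2$ factors into $(a)$ $P_1 A P_2$ is symmetric and $(b)$ $\mu_2 P_1 b=\mu_1 P_2^{T}c$. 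Since $b,c\neq 0$, one solves $(b)$ by choosing $P_1,P_2$ so that $P_1 b=P_2^{T}c=e_1$, forcing $\mu_1=\mu_2$; then it remains to show that the residual action of $\{\tilde h_1:\tilde h_1 e_1=e_1\}\times\{\tilde h_2:e_1^{T}\tilde h_2^{-1}=e_1^{T}\}$ on $A$ admits a symmetric representative in every orbit.

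The main obstacle will be establishing this final ``symmetric normal form'' claim. I plan to carry it out by a case analysis on the rank $r$ of the lower-right $(n-1)\times(n-1)$ block $M$ of $A$: in each stratum one exhibits an explicit symmetric canonical form---for instance $\begin{pmatrix}0 & 0\\0 & I_{n-1}\end{pmatrix}$ in the full-rank case, and $\begin{pmatrix}0 & e_1^{T}\\e_1 & 0\end{pmatrix}$ in the rank-zero case (only relevant when $n\leq 2$, since for $n\geq 3$ rank-zero $M$ forces $g$ singular)---and verifies that every $A$ of that rank can be conjugated to it by the residual action, possibly together with a central rescaling of $d$. Combining the four cases then yields exactly $2k=2$ non-fixed double cosets modulo $Z(G)$, i.e.\ $k=1$.
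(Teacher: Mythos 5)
Your overall architecture is sound and your count of the non-fixed cosets (the two classes $b\neq0,\,c=0$ and $b=0,\,c\neq0$, swapped by transpose) matches the paper's; the reduction ``coset contains a symmetric representative $\Rightarrow$ coset is transpose-fixed'' is also correct. But the crux --- that every Case IV orbit of $A$ under the residual action of $\{Q_{1}:Q_{1}e_{1}=e_{1}\}\times\{Q_{2}:e_{1}^{T}Q_{2}=e_{1}^{T}\}$ contains a symmetric matrix --- is exactly where the content of the lemma lives, and your sketch of it has a genuine gap. Writing $A=\begin{pmatrix}a&u^{T}\\w&M\end{pmatrix}$, invertibility of $g$ forces $\operatorname{rank}(M)\in\{n-1,n-2\}$ (rows $2,\dots,n$ of $g$ are $(w\,|\,M\,|\,0)$, so $(w\,|\,M)$ has full row rank, and dually for columns). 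Thus the rank-zero stratum you treat is vacuous for $n\geq3$, while the stratum $\operatorname{rank}(M)=n-2$ --- the one that occurs for every $n$ and carries all the difficulty --- is given no canonical form and no argument. Computing $Q_{1}AQ_{2}=\begin{pmatrix}*&(u^{T}+r^{T}M)S\\R(w+Ms)&RMS\end{pmatrix}$: when $M$ is invertible one kills both off-diagonal blocks and is done (though the $(1,1)$ entry becomes the Schur complement $a-u^{T}M^{-1}w$, an invariant of the residual action, so your proposed normal form $\operatorname{diag}(0,I_{n-1})$ is not reachable in general --- harmless, since $\operatorname{diag}(\mu,I_{n-1})$ is still symmetric, but it shows the verification was not actually carried out). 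When $\operatorname{rank}(M)=n-2$, neither $u+M^{T}r$ nor $w+Ms$ can be made zero, and one must simultaneously arrange that $RMS$ be symmetric and that $S^{T}(u+M^{T}r)=R(w+Ms)$; this is a genuine argument (doable, e.g.\ by first making $RMS$ symmetric and then matching $S^{T}u$ and $Rw$ modulo the common hyperplane $\operatorname{Im}(RMS)$ using the scaling freedom in $S$), and it is precisely the step your plan omits.

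For comparison, the paper performs the dual normalization: it first uses the $H\times H$ action to bring $A$ to $I_{n}$ (or a rank-$(n-1)$ idempotent), which forces $C=B^{-1}$, and the whole problem then collapses to one clean statement --- for any nonzero vectors $v,\varphi$ there exists a \emph{symmetric} invertible $B$ with $Bv=\varphi^{t}$ --- proved by an explicit induction in Lemma 3.2. If you keep your order of normalization (vectors first, then $A$), you still owe a complete proof of the corank-one symmetrization above, which is of comparable length and difficulty to the paper's Lemma 3.2; otherwise the paper's order of reduction leaves a more tractable residual problem.
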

\begin{proof}
A general double coset is of the form 
\[
\begin{pmatrix}B & 0\\
0 & 1
\end{pmatrix}A^{'}\begin{pmatrix}C & 0\\
0 & 1
\end{pmatrix}
\]

where $B\in GL_{n}(F_{q})$, $C\in GL_{n}(F_{q})$ and $A^{'}\in GL_{n+1}(F_{q})$.
We will write $A^{'}=\begin{pmatrix}A & v\\
\varphi & \lambda
\end{pmatrix}$, where $A\in M_{n}(F_{q})$, $v,\varphi$ are vectors in $F_{q}^{n}$
and $\lambda$ is a scalar. Because $A^{'}$ is in $GL_{n+1}(F_{q})$,
it is easy to see that $rank(A)$ is either $n$ or $n-1$. Let us
assume $rank(A)=n$ (alternatively $n-1)$. In such a case, we know
that using row and column operation we can bring $A$ to the form
$I_{n}$ ($I_{n-1})$. Note that we can apply such operations using
$B$ and $C$. therefore, we can look only at the case where $A=I_{n}$
($I_{n-1}$). Note that then, after matrix multiplication, the upper
left $n\times n$ matrix is $BC$ ($BI_{n-1}C$). Because it needs
to be of the form of $A^{t}$, we get $BC=I_{n}$ ($BI_{n-1}C=I_{n-1}$),
i.e. $C=B^{-1}$. Finally let us write down the multiplication explicitly:
\[
\begin{pmatrix}B & 0\\
0 & 1
\end{pmatrix}\begin{pmatrix}I_{n} & v\\
\varphi & \lambda
\end{pmatrix}\begin{pmatrix}B^{-1} & 0\\
0 & 1
\end{pmatrix}=\begin{pmatrix}B & Bv\\
\varphi & \lambda
\end{pmatrix}\begin{pmatrix}B^{-1} & 0\\
0 & 1
\end{pmatrix}=\begin{pmatrix}I_{n} & Bv\\
\varphi B^{-1} & \lambda
\end{pmatrix}
\]
\[
\left(alternatively\,\begin{pmatrix}I_{n-1} & Bv\\
\varphi C^{-1} & \lambda
\end{pmatrix}\right)
\]

We need to verify that there is such $B$ so that it equals $A^{t}$.
So we get the following equalities:
\[
Bv=\varphi^{t}\,\,\,\,\varphi B^{-1}=v^{t}
\]

multiply the left one by $B$ and take transpose, we get $B^{t}v=\varphi^{t}$.
First assume $\varphi,v\neq0$, and so it's enough to find a symmetric
matrix $B\in GL_{n}(F_{q})$ such that $Bv=\varphi^{t}$. This will
be proved in the next lemma. So now assume $\varphi$ or $v$ is $0$
(Therefore the case of rank $n-1$ is dismissed, because then we'll
have a row/column of zeros). It's easy to see that in this case there
is no sufficient matrix $B$, and so we are left with checking how
many such double-cosets exist. Assuming $v=0$ and $\varphi_{1}\neq0$
from the above calculation we see that all such matrix are in the
same double-coset, because it is enough to find an invertible matrix
$B$ such that $\varphi_{1}B^{-1}=\varphi_{2}$ which is obvious.
So up to the center of $GL{}_{n+1}(F_{q})$ (i.e. scalar multiplication,
which allows us to assume $\lambda=1,0$, but since $\lambda=0$ means
$\varphi\neq0,v\neq0$ we may assume $\lambda=1$) we get 2 double-cosets
($\varphi=0,\, v\neq0$, $\varphi\neq0,\, v=0$), which are transpose
of each other.\end{proof}
\begin{lem}
Over $F_{q}$, for every n-vectors $\varphi,v\neq0$ there exists
a symmetric matrix $B\in GL_{n}(F_{q})$ such that $B\varphi=v$.\end{lem}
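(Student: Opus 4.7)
The plan is to split based on whether $\varphi$ and $v$ are linearly dependent. If $v = c\varphi$ for some $c \in F_{q}^{\times}$, I would take $B = cI$: this is symmetric, invertible, and satisfies $B\varphi = v$.

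In the linearly independent case, I would interpret $B$ as the Gram matrix of a symmetric bilinear form $\beta(x,y) = x^{T}By$; the requirement $B\varphi = v$ translates into $\beta(x,\varphi) = v^{T}x$ for every $x$. Extending $\varphi$ to a basis $\{\varphi, w_{2}, \ldots, w_{n}\}$ of $F_{q}^{n}$ would pin down the first row and column of the Gram matrix, leaving the symmetric $(n-1) \times (n-1)$ block free:
\[
G = \begin{pmatrix} v^{T}\varphi & b^{T} \\ b & C \end{pmatrix}, \qquad b = (v^{T} w_{2}, \ldots, v^{T} w_{n})^{T}.
\]
The task would then be to choose $C$ so that $\det G \neq 0$, after which $B$ is recovered from $G$ by the obvious change of basis and inherits symmetry and invertibility.

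If $v^{T}\varphi \neq 0$, a Schur-complement argument (e.g.\ take $C = bb^{T}/(v^{T}\varphi) + I_{n-1}$) yields $\det G = v^{T}\varphi \neq 0$. If $v^{T}\varphi = 0$ then $b$ is necessarily nonzero (otherwise $v^{T}$ would vanish on a basis, forcing $v = 0$), and the Schur complement gives $\det G = -\det(C)(b^{T}C^{-1}b)$, reducing the problem to the sub-claim: for any nonzero $b \in F_{q}^{m}$ there exists a symmetric invertible $E$ with $b^{T}Eb \neq 0$.

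The hard part will be this sub-claim, especially in characteristic $2$. In odd characteristic, after permuting so that $b_{1} \neq 0$, I would take $E = I$ when $b^{T}b \neq 0$ and $E = \operatorname{diag}(2, 1, \ldots, 1)$ when $b^{T}b = 0$, yielding $b^{T}Eb = b^{T}b + b_{1}^{2} = b_{1}^{2} \neq 0$. In characteristic $2$ the latter matrix is singular, but one can place a block $\begin{pmatrix} 0 & 1 \\ 1 & 1 \end{pmatrix}$ in the top-left $2 \times 2$ corner of $E$ (with identity on the remaining coordinates): this remains invertible, and using that $b^{T}b = 0$ in characteristic $2$ forces $\sum_{i} b_{i} = 0$, a direct computation gives $b^{T}Eb = b_{1}^{2} \neq 0$.
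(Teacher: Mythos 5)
Your proof is correct, and it takes a genuinely different route from the paper's. The paper argues by induction on $n$: it writes $B$ in block form $\begin{pmatrix}c & r_{1}^{t}\\ r_{1} & A\end{pmatrix}$ and splits into cases according to which of the first coordinates of $v,\varphi$ and the remaining subvectors vanish, building $c$, $r_{1}$, $A$ explicitly in each case --- including one large explicit matrix whose invertibility is checked by row reduction, and a final case where one argues that $c=0$ or $c=1$ must work since otherwise $e_{1}$ would lie in the span of the other rows. You instead change basis: completing $\varphi$ to a basis turns $B\varphi=v$ into prescribing the first row and column of the Gram matrix $G$, leaving a free symmetric block $C$, and the Schur complement settles $v^{T}\varphi\neq0$ at once while $v^{T}\varphi=0$ reduces to the clean sub-claim that every nonzero $b$ satisfies $b^{T}Eb\neq0$ for some invertible symmetric $E$ --- which you verify in all characteristics, the block $\begin{pmatrix}0 & 1\\ 1 & 1\end{pmatrix}$ handling characteristic $2$ (needed, since the lemma is applied for every $q$). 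Your version avoids induction, isolates the only genuine difficulty into a short sub-claim, and makes the characteristic-$2$ issue explicit; the paper's induction is more elementary but pays for it with a longer and more delicate case analysis. The degenerate situations ($n=1$, or $b$ of length $1$) fall into your linearly-dependent branch and your $E=I$ branch respectively, so nothing is missed.
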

\begin{proof}
We use induction, Using the following form:
\[
\begin{pmatrix}a_{1}\\
v_{1}
\end{pmatrix}=\begin{pmatrix}c & r_{1}^{t}\\
r_{1} & A
\end{pmatrix}\begin{pmatrix}b_{1}\\
\varphi_{1}
\end{pmatrix}
\]

Were $a_{1},b_{1},c$ are scalars, $v_{1},r_{1},\varphi_{1}$ are
(n-1)-vectors and $A$ is an (n-1)-square matrix. Let's first solve
the trivial case - if $a_{1},b_{1},v_{1},\varphi_{1}\neq0$ then set
$c=a_{1}/b_{1}$, $r_{1}=0$ and acquire $A$ by using induction on
$v_{1}=A\varphi_{1}$. If $\varphi_{1}=0$, then since $\varphi\neq0$
we get $b_{1}\neq0$. So obviously $c=a_{1}/b_{1},r_{1}=\frac{1}{b_{1}}v_{1}$.
All we need now is a suitable $A$, with no restraints other than
to be symmetric, and so that the whole matrix $B$ will be invertible.
By row and column swaps we can assume that the non-zero elements of
$r_{1}$ are the upper-most. If $r_{1}=0$ then $a_{1}\neq0$ and
we can take $A=I_{n-1}$. So assume $r_{1}\neq0$ and we choose the
following $A$ (we show here the whole $B$):
\[
\begin{pmatrix}\frac{a_{1}}{b_{1}} & [r_{1}]_{1} & [r_{1}]_{2} & [r_{1}]_{3} & \ldots & [r_{1}]_{p} & 0 & \ldots & 0\\
{}[r_{1}]_{1} & 0 & 0 & 0 & 0 & 0\\
{}[r_{1}]_{2} & 0 & 1 & 0 & 0 & 0\\
{}[r_{1}]_{3} & 0 & 0 & 1 & 0 & 0\\
\vdots & 0 & 0 & 0 & \ddots & 0\\
{}[r_{1}]_{p} & 0 & 0 & 0 & 0 & 1\\
0 &  &  &  &  &  & 1 & 0 & 0\\
\vdots &  &  &  &  &  & 0 & \ddots & 0\\
0 &  &  &  &  &  & 0 & 0 & 1
\end{pmatrix}
\]
By row and column operations, and by the assumption $[r_{1}]_{1}\neq0$
we can bring it to the form:
\[
\begin{pmatrix}0 & 1 & 0 & 0 & \ldots & 0 & 0 & \ldots & 0\\
1 & 0 & 0 & 0 & 0 & 0\\
0 & 0 & 1 & 0 & 0 & 0\\
0 & 0 & 0 & 1 & 0 & 0\\
\vdots & 0 & 0 & 0 & \ddots & 0\\
0 & 0 & 0 & 0 & 0 & 1\\
0 &  &  &  &  &  & 1 & 0 & 0\\
\vdots &  &  &  &  &  & 0 & \ddots & 0\\
0 &  &  &  &  &  & 0 & 0 & 1
\end{pmatrix}
\]

Which is invertible, and hence $B$ is invertible. Since $B^{-1}$
is symmetric iff $B$ is symmetric, the case $\varphi_{1}=0$ solves
the case $v_{1}=0$, and all is left is the case $a_{1}=0$ or $b_{1}=0$,
which are again the same. So assume $b_{1}=0$. By induction there's
a symmetric invertible $A$ such that $v_{1}=A\varphi_{1}$. Now we
need to find suitable $r_{1},c$. We choose arbitrary $r_{1}$ - with
the only constrain that $<r_{1},\varphi_{1}>=a_{1}$, which is easy.
for $c$ we have two options: 0 or 1. If both don't work, it means
that the rows $2$ to $n$ have the vector $e_{1}$ in their span.
But if we look at the vectors from their second place, they are vectors
in $A$ which are linearly independent, and so all the coefficients
are 0, and we get a contradiction. Hence 0 or 1 is always good, and
that concludes the proof. 
\end{proof}
Combining lemmas 2.2, 2.5 and 3.1 (for the case $\sigma=\alpha=transpose$)
we finally get the result for $G=GL_{n+1}(F_{q})$, $H=GL_{n}(F_{q})$.
\begin{cor}
For every $\pi$ irreducible representation of $GL_{n+1}(F_{q})$
over $F$ we have $dim(\pi^{GL_{n}(F_{q})})\leq2$.
\end{cor}

\section{O(n+1),O(n) Over Finite Fields}
\begin{thm}
For $q$ not a power of $2$, $(O_{n+1}(F_{q}),O_{n}(F_{q}))$ is
a Gelfand pair over $F$.\end{thm}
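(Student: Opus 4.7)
The strategy is to apply the classical Gelfand trick (Lemma~1.1, or equivalently Lemma~2.2 with $k=0$ combined with Lemma~2.5 or~3.2 to dispose of the hypothesis $(\pi^*)^H\neq 0$) using the anti-involution $\sigma$ equal to transpose. Realizing $O_{n+1}(F_q)$ as the stabilizer of the standard symmetric form with matrix $I_{n+1}$, transpose acts as inversion on $G=O_{n+1}(F_q)$, preserves $H=O_n(F_q)$, preserves conjugacy classes, and fixes the center $\{\pm I\}$ and hence every central character. It therefore suffices to prove that every $H$-double coset in $G$ (modulo the center) is preserved by transpose.

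To that end, I would parameterize $H\backslash G/H$ via the orbit map $g\mapsto g\cdot e_{n+1}$, which identifies $G/H$ with the ``unit sphere'' $S=\{w\in F_q^{n+1}:Q(w)=1\}$. Since $H$ is exactly the stabilizer of $e_{n+1}$, the double cosets correspond to $H$-orbits on $S$. Writing $w=v+\lambda e_{n+1}$ with $v\in F_q^n$ and $Q(v)+\lambda^2=1$, Witt's extension theorem (valid because $q$ is odd) implies that $O_n(F_q)$ acts transitively on vectors of any given norm in $F_q^n$. Hence the $H$-orbit of $w$ is classified by the single scalar $\lambda$.

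For $g=\begin{pmatrix}A & v\\ \varphi & \lambda\end{pmatrix}\in G$, this invariant is precisely the $(n+1,n+1)$-entry $\lambda$, which is manifestly preserved by transpose. Therefore transpose fixes every double coset, and the Gelfand trick delivers the result over any algebraically closed $F$ of characteristic different from $2$.

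The main obstacle will be a careful case analysis around Witt's theorem: the degenerate situation $v=0$ (which forces $\lambda=\pm1$ and $g=\pm I$, trivially fixed by transpose); possibly very small $n$; and the fact that in even dimension there are two isometry classes of orthogonal forms over $F_q$, which one must track through the embedding $O_n\hookrightarrow O_{n+1}$ and the corresponding norm form on $F_q^n$. One should also pinpoint where the hypothesis that $q$ is not a power of $2$ enters essentially, namely Witt's theorem and the identification of symmetric bilinear forms with quadratic forms in odd characteristic.
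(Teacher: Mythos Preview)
Your approach is correct and genuinely different from the paper's. Both identify $G/H$ with the unit sphere $S$, but then diverge: the paper uses the bijection $H\backslash G/H\cong G\backslash(S\times S)$, under which transpose (inversion on $G$) becomes the swap $(u,v)\mapsto(v,u)$, and then exhibits for every pair of unit vectors $u,v$ an explicit orthogonal reflection interchanging them (the reflection in $(u-v)^\perp$ when $u-v$ is anisotropic, and the reflection along $u+v$ otherwise, using $\langle u+v,u+v\rangle=4\neq0$). You instead use $H\backslash G/H\cong H\backslash S$ and classify $H$-orbits by the invariant $\lambda=\langle w,e_{n+1}\rangle$ via Witt's theorem. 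Your route is shorter if one is willing to quote Witt; the paper's is entirely elementary and self-contained, and makes the role of the hypothesis $q$ odd visible in the division by $2$ and by $4$ rather than hidden inside Witt.

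One small repair to your case analysis: when $\lambda=\pm1$ and $F_q^{\,n}$ contains nonzero isotropic vectors, the scalar $\lambda$ alone does not separate the orbit with $v=0$ from the orbit with $v\neq0$, $Q(v)=0$. To finish you must also check that transpose preserves this extra invariant, i.e.\ that $v=0\iff\varphi=0$; this follows immediately from $gg^t=I$. (Relatedly, $v=0$ only forces $g\in H\cdot(\pm I)$, not $g=\pm I$, though the conclusion about the double coset is the same.)
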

\begin{proof}
We use lemmas 2.2, 2.5 with $\alpha=\sigma=transpose$. Denote $G=O(n+1),H=O(n)$.
Since $H\backslash G/H\cong G\backslash(G/H\times G/H)$ (which is
true for every groups $G,H$), and since $G/H$ is isomorphic to the
unit sphere, it's enough to show that for any unit vectors $u,v$
there is an element $g\in G$ such that $gu=v,gv=u$ (since transpose
translates to $(u,v)\mapsto(v,u)$). If $u-v$ is not orthogonal to
itself, take $g(x)=x-2\frac{<u-v,x>}{<u-v,u-v>}(u-v)$ (i.e. the reflection
relative to the hyperplane orthogonal to $u-v)$.
\begin{eqnarray*}
g(u) & = & u-2\frac{<u-v,u>}{<u-v,u-v>}(u-v)=u-2\frac{1-<u,v>}{2-2<u,v>}(u-v)=v\\
g(v) & = & v-2\frac{<u-v,v>}{<u-v,u-v>}(u-v)=v-2\frac{<u,v>-1}{2-2<u,v>}(u-v)=u
\end{eqnarray*}
If $u-v$ is orthogonal to itself, notice that then 
\[
0=<u-v,u-v>=2-2<u,v>\Rightarrow<u,v>=1
\]

and therefore
\[
<u+v,u+v>=2+2<u,v>=4\neq0
\]

so we can take $g(x)=2\frac{<u+v,x>}{<u+v,u+v>}(u+v)-x=\frac{<u+v,x>}{2}(u+v)-x$
(i.e. the reflection relative to $u+v$).
\begin{eqnarray*}
g(u) & = & \frac{<u+v,u>}{2}(u+v)-u=(u+v)-u=v\\
g(v) & = & \frac{<u+v,v>}{2}(u+v)-v=(u+v)-v=u
\end{eqnarray*}
\end{proof}

\lyxaddress{Y. Ben Shalom, The School of Mathematical Sciences, Tel Aviv University,
Israel.}

\lyxaddress{E-mail address: yoavben@tau.ac.il}
\end{document}